\newtheorem{theorem}{Theorem}
\newtheorem{lemma}[theorem]{Lemma}
\newtheorem{corollary}[theorem]{Corollary}
\theoremstyle{definition}
\newtheorem{definition}[theorem]{Definition}
\newcommand{\Z}{\mathbb{Z}}
\newcommand{\Q}{\mathbb{Q}}
\newcommand{\F}{\mathbb{F}}
\newcommand{\inv}{^{-1}}
  \def \bP {{\bf P}}
             \def \cH {{\mathcal H}}
             \def \cS {{\mathcal S}}
 \def \cC {{\mathcal C}}
\def \cP {{\mathcal P}}
\def \cV {{\mathcal V}}
             \def \cW {{\mathcal W}}
\def\F{\mathbb F}
\def\R{\mathbb R}
\def\Q{\mathbb Q}
\def\Z{\mathbb Z}
\def\inv{^{-1}}
\begin{document}

\title[Resolutions of the Steinberg module]{Resolutions of the Steinberg module for $GL(n)$}

\author{Avner Ash} \address{Boston College\\ Chestnut Hill, MA 02445}
\email{Avner.Ash@bc.edu} \author{ Paul E. Gunnells}
\address{University of Massachusetts Amherst\\ Amherst, MA 01003}
\email{gunnells@math.umass.edu} \author{Mark McConnell}
\address{Center for Communications Research\\ Princeton, New Jersey 08540}
\email{mwmccon@idaccr.org}

\thanks{
AA wishes to thank the National Science Foundation for support of this
research through NSF grant DMS-0455240, and also the NSA through grant
H98230-09-1-0050.  This manuscript is submitted for publication with
the understanding that the United States government is authorized to
produce and distribute reprints.  PG wishes to thank the National
Science Foundation for support of this research through NSF grant
DMS-0801214.}

\keywords{Cohomology of arithmetic groups, Voronoi complex, Steinberg
module, modular symbols}

\subjclass[2010]{Primary 11F75; Secondary 11F67, 20J06, 20E42}

\date{June 24, 2011}

\maketitle

\begin{abstract}
We give several resolutions of the Steinberg representation $St_{n}$
for the general linear group over a principal ideal domain, in
particular over $\Z$.  We compare them, and use these results to prove
that the computations in~\cite{AGM4} are definitive.  In particular,
in \cite{AGM4} we use two complexes to compute certain cohomology
groups of congruence subgroups of $SL(4,\Z)$.  One complex is based on
Voronoi's polyhedral decomposition of the symmetric space for $SL
(n,\R)$, whereas the other is a larger complex that has an action of
the Hecke operators.  We prove that both complexes allow us to compute
the relevant cohomology groups, and that the use of the Voronoi
complex does not introduce any spurious Hecke eigenclasses.
\end{abstract}

\section{Introduction}\label{intro}

In a series of papers \cite{AGM1, AGM2, AGM3, AGM4} we computed the
cohomology $H^{5}$ with constant coefficients of certain
congruence subgroups $\Gamma\subset SL(4,\Z)$, with $5$ being chosen
since this is the topmost degree that can contain classes corresponding to
cuspidal automorphic forms \cite[\S 1]{AGM1}.  We computed the action
of the Hecke operators on the cohomology and studied connections with
representations of the absolute Galois group of $\Q$.

The main tool in our computations is the \emph{Steinberg module}
$St_n$, which is the dualizing module of any torsionfree congruence
subgroup of $SL(n,\Z)$ \cite{B-S}. We used a variant of a resolution
of the Steinberg module for $GL(4,\Z)$ first published in \cite{LS}.
This variant is called the \emph{sharbly complex}.  In our previous
papers, we only asserted that the sharbly complex is a resolution of
$St_n$; in this paper, we provide a proof.

When multiplication by $30$ is not invertible in the coefficient
module, our computations were not definitive for the following
reason.  To compute the cohomology we use the \emph{Voronoi complex},
which is a chain complex built from an $SL (n,\Z)$-equivariant
polytopal tessellation of the symmetric space $SL(n,\R)/SO (n)$.  This
tessellation is finite modulo $SL(n,\Z)$, and thus provides a
convenient tool for explicit computations.  But to compute the action
of the Hecke operators on cohomology, we must use the sharbly complex
instead of the Voronoi, since the Hecke operators act on the former
and not on the latter.  There is a map from the Voronoi-based
cohomology in degree 5 to the sharbly-based cohomology in that degree
that we thought we had to assume was injective for our results to be
meaningful.  As far as we know, this map may fail to be injective.
However, in this paper, we prove that if $n = 4$, the Hecke
eigenvalues we compute on $H^5$ are meaningful. We show that they are
Hecke eigenvalues in the homology of $\Gamma$ with coefficients in the
sharbly complex.  We prove a similar result for $n=3$.

Given any congruence subgroup $\Gamma \subset SL (n,\Z)$ and any $\Z
\Gamma$-module $M$, one can compute the homology of the sharbly
complex with coefficients in $M$.  If $M$ is a vector space over
$\F_p$ for a prime $p$ that is not a torsion prime of $\Gamma$, or
over a field of characteristic zero, then the sharbly homology is
isomorphic to the group cohomology of $\Gamma$.  For torsion primes,
however, what one computes with the sharbly complex is not so clear.
Our main result in this direction (Corollary \ref{S-Steinberg}) is
that if $p$ is any odd prime, then the sharbly homology is isomorphic
to the \emph{Steinberg homology} with coefficients in $M$; by
definition this is $H_{*} (\Gamma ,St_{n}\otimes_{\Z}M)$.  Note that
these latter groups are exactly the group cohomology of $\Gamma$ with
coefficients in $M$ if $\Gamma $ is torsionfree.  Furthermore, we
prove in Corollary~\ref{V-Steinberg} that if $n\le 4$ the Voronoi
homology (Definition \ref{def:voronoi.homology}) is isomorphic to the
Steinberg homology.  This was left open in~\cite{AGM4}.

We remark that the key to making our results work in odd
characteristics is to replace alternating chains with ordered chains
in the sharbly complex.  Our result on the meaningfulness of our Hecke
eigenvalue computations depends on using Lee and Szczarba's original
resolution of the Steinberg module.

\section{The resolution of Lee and Szczarba of the Steinberg
module}\label{ls}

In this section we briefly review the relevant contents of \cite[\S
3]{LS}.  We phrase Lee and Szczarba's construction slightly
differently, but equivalently, in a way that is more suitable for our
purposes.

Let $n\ge1$.  For any ring $A$, let $A^n$ denote the right
$GL(n,A)$-module of row vectors.  We say that $v\in A^n$ is
\emph{unimodular} if the entries of $v$ generate the unit ideal in
$a$.  Let $M_{s,t}(A)$ denote the set of all $s\times t$ matrices with
entries in $A$.  For any set $U$, let $\Z(U)$ denote the free abelian
group generated by $U$.  All homology will be taken with $\Z$
coefficients unless otherwise indicated.

Now we assume $A$ is a principal ideal domain.  Let $F$ be its field
of fractions, and assume $n\ge 2$.  The \emph{Tits building} $T_n(A)$
is the simplicial complex whose vertices are the proper nonzero
subspaces of $F^n$ and whose simplices correspond to flags of
subspaces.  Note that $T_n(A)$ depends only on $F$.  By the
Solomon--Tits theorem $T_{n} (A)$ has the homotopy type of a wedge of
$(n-2)$-dimensional spheres.  It is a right $GL(n,F)$-module and
therefore so is its homology.  We define the \emph{Steinberg module}
to be the reduced homology of the Tits building:
\begin{equation}\label{eq:steinberg-def}
St_n(A)=\widetilde H_{n-2}(T_n(A)).
\end{equation}

For each hyperplane $H$ of $F^n$ let $S(H)$ be the subcomplex of
$T_n(A)$ consisting of all simplices of $T_n(A)$ with $H$ as a vertex.
The $S(H)$ form an acyclic cover of $T_n(A)$, and for all $q\ge0$ the
nerve $\widetilde N$ of this cover satisfies
$$
H_{q}(T_n(A))=H_{q}(\widetilde N).
$$

For $k\ge0$ let $\cS_k \subset M_{n+k,n}(A)$ be the set of all
matrices $M$ with all rows unimodular.  Let $\cP_k \subset \cS_{k}$ be
the subset of matrices with rank $<n$.  These sets have a natural
right action by $GL(n,A)$.  We define the boundary operator
$$
\partial\colon\Z(\cS_k)\to \Z(\cS_{k-1})
$$
by $\partial M = \sum_{i=1}^{n+k} (-1)^i M_i$, where $M_i$ is the
matrix formed from $M$ by deleting its $i$th row.  Since $\partial$
takes $\Z(\cP_k)$ to $\Z(\cP_{k-1})$, we can form the quotient complex
$$
\cC_k(A):=\Z(\cS_k)/\Z(\cP_k).
$$
This is a complex of right $GL(n,A)$-modules.

Theorem 3.1 of \cite{LS} asserts that there is an epimorphism $\phi\colon
\cC_0(A)\to St_n(A)$ such that
$$
\cdots\to \cC_k(A) \to \cC_{k-1}(A) \to \cdots \to \cC_0(A) \to St_n(A) \to 0
$$
is a free $GL(n,A)$-resolution of $St_n(A)$.
For the convenience of the reader, and since we will need to use
similar arguments in the sequel, we sketch the proof.

Let $K$ be the simplicial complex whose vertices are the unimodular
elements in $A^n$ and whose simplices are all finite nonempty subsets
of vertices.  Let $L$ be the subcomplex of $K$ consisting of those
simplices all of whose vertices lie in one and the same proper direct
summand of $A^n$.  The group $GL(n,A)$ acts on the right of $K$ and
$L$.  Since $K$ is acyclic, the exact sequence of the pair $(K,L)$
implies $H_k(K,L) = \widetilde H_{k-1}(L)$
for all $k\ge 0$.

If $M$ is a simplicial complex or a pair of such, let $C_{*}(M)$
denote the \emph{ordered} simplicial chain complex on $M$
\cite[\S4.3]{spanier}.  The following is Lemma~3.2 of \cite{LS}:

\begin{lemma}\label{key}
$H_{q}(K,L)=0$ if $q\ne n-1$ and $H_{n-1}(K,L)\approx St_n(A)$.
\end{lemma}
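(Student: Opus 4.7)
The plan is to run a nerve argument on $L$ parallel to the one the paper has just sketched for $T_n(A)$, and to feed the result into the long exact sequence of the pair $(K,L)$.

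First I would observe that $K$ is the full simplicial complex on its vertex set (every finite nonempty subset of vertices being a simplex), hence contractible. The reduced long exact sequence of $(K,L)$ then yields $H_q(K,L)\cong \widetilde H_{q-1}(L)$ for every $q\ge 0$, so it suffices to show that $\widetilde H_q(L)$ vanishes for $q\ne n-2$ and equals $St_n(A)$ when $q=n-2$.

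For this, for each hyperplane $H$ of $F^n$ I would let $K_H$ be the subcomplex of $L$ whose simplices have all vertices in the direct summand $H\cap A^n$ of $A^n$ (that this intersection is a direct summand uses the PID hypothesis on $A$). Each $K_H$ is itself a full simplicial complex on its vertex set, hence contractible, and the identity $K_{H_1}\cap\cdots\cap K_{H_k}=K_{H_1\cap\cdots\cap H_k}$, together with the fact that $K_D$ is empty iff the summand $D$ is zero, shows that every finite intersection is either contractible or empty. Since every proper direct summand of $A^n$ lies in some hyperplane summand, the $K_H$ cover $L$. By the nerve lemma (equivalently, by the Mayer--Vietoris spectral sequence for this cover, which degenerates at $E_2$), $\widetilde H_*(L)$ is isomorphic to the reduced homology of the nerve $N'$, whose vertices are the hyperplanes of $F^n$ and whose simplices are those finite collections $\{H_1,\ldots,H_k\}$ with $\bigcap H_i\ne 0$.

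The key point is that $N'$ coincides with the nerve $\widetilde N$ already attached to the cover $\{S(H)\}$ of $T_n(A)$ just above the lemma: $S(H_1)\cap\cdots\cap S(H_k)$ is the subcomplex of $T_n(A)$ consisting of flags of subspaces contained in $\bigcap H_i$, and so is nonempty under exactly the same condition. Combining the two identifications gives $\widetilde H_*(L)\cong \widetilde H_*(\widetilde N)\cong \widetilde H_*(T_n(A))$, and by Solomon--Tits the latter is concentrated in degree $n-2$ and equal there to $St_n(A)$. Substituting into the isomorphism from the first step proves the lemma.

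The main obstacle I expect is the bookkeeping: verifying the intersection identity $K_{H_1}\cap\cdots\cap K_{H_k}=K_{\bigcap H_i}$ at the level of simplices, and checking that $N'$ really coincides with $\widetilde N$. The topological inputs (nerve lemma and Solomon--Tits) are standard and are in effect already granted by the paper.
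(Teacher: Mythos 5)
Your proof is correct and follows the same strategy as the paper: reduce to $\widetilde H_{*}(L)$ via the pair $(K,L)$, cover $L$ by the contractible full subcomplexes indexed by rank-$(n-1)$ summands (you index by hyperplanes of $F^n$ and pass through $H\cap A^n$, which is the same thing), apply the nerve lemma, and identify the nerve with $\widetilde N$ so that Solomon--Tits finishes it. The only cosmetic difference is that the paper indexes the cover by direct summands of $A^n$ and then produces the isomorphism $N\to\widetilde N$ via $H\mapsto H\otimes_A F$, whereas you index by hyperplanes of $F^n$ from the start.
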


Assume the lemma.  The $(n-2)$-skeletons of $L$ and $K$ are the same, so
$C_{n+k-1}(K,L)=0$ if $k<0$.  We obtain an exact sequence
$$
\cdots\to C_{n+k}(K,L)\to C_{n+k-1}(K,L) \to\cdots\to C_{n-1}(K,L) \to
St_n(A)\to 0.
$$

We can map a simplex $\sigma$ in $K$ to the matrix whose rows consist
of the vertices of $\sigma$, in order.  This gives us isomorphisms for
$k\ge0$:
$$
C_{n+k-1}(K)\approx\Z(\cS_k) \ \  \mbox{and} \  \ C_{n+k-1}(L)\approx\Z(\cP_k).
$$ 
Therefore $C_{n+k-1}(K,L)=\cC_k(A)$ and we have an exact sequence
$$
\cdots\to \cC_{k+1}(A)\to \cC_{k}(A) \to\cdots\to \cC_{0}(A) \to St_n(A)\to 0.
$$

It remains to prove Lemma~\ref{key}.

\begin{proof}
Let $\cH$ be the set of direct summands of rank $n-1$ in $A^n$.  Since
$A$ is a PID, $H$ is isomorphic to $A^{n-1}$.  For $H\in\cH$, let
$K_H$ denote the subcomplex of $L$ consisting of all simplices whose
vertices lie in $H$.  For the same reason that $K$ is contractible, so
is $K_H$.  More generally, if $H_1,\dots H_q\in\cH$, then
$H_1\cap\cdots\cap H_q$ is isomorphic to $A^{n-q}$ and
$K_{H_1}\cap\cdots\cap K_{H_q}$ is contractible.

Therefore, $\{K_H\}$ is an acyclic cover of $L$.  Letting $N$ denote
its nerve, we have for all $q\ge0$
$$
H_q(L)\approx H_q(N).
$$
The map $H\mapsto H\otimes_A F$ defines a simplicial isomorphism
$N\to\widetilde N$.  We obtain a sequence of $GL(n,A)$-equivariant
isomorphisms:
$$
H_q(K,L)\approx \widetilde H_{q-1}(L)\approx
\widetilde H_{q-1}(N)\approx\widetilde H_{q-1}(\widetilde N)
\approx\widetilde H_{q-1}(T_n(A)).
$$
By the Solomon--Tits theorem and \eqref{eq:steinberg-def}, the proof
is complete.
\end{proof}

It is easy to see that $\cC_{*}$ is free as a $GL(n,A)$ module.

In \cite{AR}, to each matrix $X$ in $GL(n,F)$ is associated the
modular symbol $[[X]]\in St_n(A)$, which is the fundamental class of
the apartment corresponding to $X$ in the Tits building.  The map
$\phi\colon\cC_{0}(A) \to St_n(A)$ may be taken to be $M\mapsto[[M]]$, and
we will do so in the sequel.

\section{A variant resolution of the Steinberg module}\label{vls}

In this section we present a variant of the construction of Lee and
Szczarba and prove that it gives a resolution of the Steinberg module
as well.  We keep the same notation as in the preceding section.

Let $\bP^{n-1}(F)$ be the set of lines in $F^n$.  Set $\cS'_k =
(\bP^{n-1}(F))^{n+k}$ and let $\cP'_k$ be the subset of $\cS'_k$ where
the lines in the $(n+k)$-tuple do not span $F^n$.  The right
$GL(n,F)$-action on $F^n$ induces an action on $\cS'_k$ that preserves
$\cP'_k$.

There is an obvious $GL(n,A)$-equivariant quotient map
$\theta_k\colon\cS_k \to \cS'_k$ that induces a map $\cP_k\to
\cP'_k$.  The boundary operator $\partial$ induces a boundary operator
that we also denote by $\partial$:
$$
\partial\colon\Z(\cS'_k)\to \Z(\cS'_{k-1}).
$$
As before $\partial$ takes $\Z(\cP'_k)$ to $\Z(\cP'_{k-1})$, and we
can form the
quotient $GL(n,F)$-complex
$$
\cC'_k(A):=\Z(\cS'_k)/\Z(\cP'_k).
$$
Note that $\phi\colon \cC_{0}(A) \to St_n(A)$ is constant on the
fibers of $\theta_0$ and therefore induces an epimorphism $\phi'\colon
\cC'_0(A)\to St_n(A)$.

\begin{theorem}\label{C'} The exact sequence 
\begin{equation}\label{eqn:resolution}
\cdots\to \cC'_k(A) \to \cC'_{k-1}(A) \to \cdots \to \cC'_0(A) 
\stackrel{\phi'}{\to} St_n(A) \to 0
\end{equation}
is a $GL(n,F)$-resolution of $St_n(A)$.  
\end{theorem}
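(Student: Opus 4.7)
The plan is to mimic the proof of Lee--Szczarba's resolution almost verbatim, replacing unimodular row vectors in $A^n$ with lines in $F^n$. Everything passes to the $GL(n,F)$-equivariant setting because the set $\bP^{n-1}(F)$ carries a natural right $GL(n,F)$-action that is compatible via $\theta$ with the $GL(n,A)$-action on unimodular vectors. Since $\phi'$ is already defined in the setup, the content of the theorem is exactness of \eqref{eqn:resolution}, which we will extract from a relative homology computation for a pair of simplicial complexes.

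First, I would introduce the pair $(K', L')$ that is the primed analog of $(K, L)$ from Section \ref{ls}. Let $K'$ be the simplicial complex whose vertices are the elements of $\bP^{n-1}(F)$ and whose simplices are all finite nonempty subsets of vertices. Let $L' \subset K'$ be the subcomplex consisting of simplices whose vertex-lines lie in a common proper $F$-subspace of $F^n$. The assignment of an ordered $(n+k)$-tuple of lines to itself identifies the ordered simplicial chain complexes as $C_{n+k-1}(K') \cong \Z(\cS'_k)$ and $C_{n+k-1}(L') \cong \Z(\cP'_k)$, and thus $C_{n+k-1}(K',L') = \cC'_k(A)$. As in the unprimed case, any $n-1$ distinct lines span at most an $(n-1)$-dimensional subspace, so the $(n-2)$-skeletons of $K'$ and $L'$ coincide and $C_{n+k-1}(K',L') = 0$ for $k<0$.

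Next, I would prove the analog of Lemma \ref{key}: $H_q(K', L') = 0$ for $q \ne n-1$ and $H_{n-1}(K', L') \cong St_n(A)$. Since $K'$ is a (huge) simplex it is contractible, so the long exact sequence of the pair gives $H_q(K', L') \cong \widetilde H_{q-1}(L')$. For each hyperplane $H \subset F^n$, let $K'_H$ be the subcomplex of $L'$ whose vertex-lines lie in $H$; this is again a simplex and hence contractible, and more generally $K'_{H_1} \cap \cdots \cap K'_{H_q}$ is the simplex on the lines of $H_1 \cap \cdots \cap H_q$, contractible when nonempty. Thus $\{K'_H\}$ is an acyclic cover of $L'$, and its nerve is canonically identified with the nerve $\widetilde N$ of the cover of the Tits building $T_n(A)$ by the stars $S(H)$. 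This yields
\begin{equation*}
H_q(K', L') \;\cong\; \widetilde H_{q-1}(L') \;\cong\; \widetilde H_{q-1}(\widetilde N) \;\cong\; \widetilde H_{q-1}(T_n(A)),
\end{equation*}
which by Solomon--Tits vanishes for $q \ne n-1$ and equals $St_n(A)$ for $q = n-1$. Combining with the skeletal vanishing gives an exact sequence of the desired shape.

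The last step, and the point I expect to need the most care, is identifying the augmentation $\cC'_0(A) \to St_n(A)$ produced by this homological computation with the map $\phi'$ of the setup. The cleanest way is to observe that the quotient map $\theta_\ast \colon \cC_\ast(A) \to \cC'_\ast(A)$ is a chain map that fits in a commuting square with the two augmentations $\phi$ and $\phi'$ (by the very definition of $\phi'$), and that $\theta_\ast$ at the level of $(K,L)$ vs. $(K',L')$ comes from the simplicial map sending a unimodular vector to the line it spans; under the nerve isomorphisms this is compatible with the identity on $\widetilde N$. So once Lee--Szczarba's identification is granted for $\phi$, the analogous identification for $\phi'$ follows, completing the proof. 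The main technical obstacle is purely bookkeeping: ensuring that the two acyclic-cover spectral sequences (for $L$ and for $L'$) are canonically identified in a way that matches the $GL(n,F)$-action, but since both nerves are literally the nerve of the cover of $T_n(A)$ by the $S(H)$, this is automatic.
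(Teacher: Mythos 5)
Your proposal is correct and follows essentially the same route as the paper: the pair $(K',L')$, the acyclic cover of $L'$ by the $K'_H$ for hyperplanes $H$, identification of the nerve with $\widetilde N$, and the resulting isomorphism with $\widetilde H_{q-1}(T_n(A))$. The only cosmetic difference is in identifying the augmentation with $\phi'$: you route through the commuting square with $\theta_*$ and Lee--Szczarba's identification for $\phi$, whereas the paper observes directly that the modular symbol $[[X]]$, i.e.\ the fundamental class of the apartment of $X$, depends only on the lines spanned by the rows of $X$; both arguments are valid and of comparable length.
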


We remark that \eqref{eqn:resolution} is not a free
$GL(n,A)$-resolution if $A$ has more than one unit.

\begin{proof}
Let $K'$ be the simplicial complex whose vertices are the elements of
$\bP^{n-1}(F)$ and whose simplices are all finite nonempty subsets of
vertices.  Let $L'$ be the subcomplex of $K'$ consisting of those
simplices all of whose vertices lie in one and the same proper direct
summand of $F^n$.  The group $GL(n,F)$ acts on the right of $K'$ and
$L'$.  Since $K'$ is acyclic, we have $H_k(K',L') = \widetilde
H_{k-1}(L')$ for all $k\ge 0$ from the exact sequence of the pair
$(K',L')$.

\begin{lemma}\label{key'}
$H_{q}(K',L')=0$ if $q\ne n-1$ and $H_{n-1}(K',L')\approx St_n(A)$ via $\phi'$.
\end{lemma}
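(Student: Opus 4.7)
My plan is to mirror the proof of Lemma~\ref{key} almost verbatim, with unimodular vectors in $A^n$ replaced by lines in $F^n$, and direct summands of $A^n$ replaced by subspaces of $F^n$ (harmless, since every proper subspace over a field is a direct summand). As $K'$ is a (possibly infinite) simplex on $\bP^{n-1}(F)$ it is contractible, and the long exact sequence of the pair $(K',L')$ gives $H_q(K',L') \cong \widetilde H_{q-1}(L')$, so it suffices to compute $\widetilde H_*(L')$.

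For each hyperplane $V \subset F^n$ let $K'_V \subset L'$ be the subcomplex of simplices whose vertices all lie in $V$. Every proper subspace of $F^n$ is contained in some hyperplane, so these subcomplexes cover $L'$. Each $K'_V$ is itself a full simplex on the set of lines in $V$, hence contractible; and for any hyperplanes $V_1,\dots,V_q$ the intersection $K'_{V_1}\cap\cdots\cap K'_{V_q}$ is either empty (when $V_1\cap\cdots\cap V_q = 0$) or is a full simplex on the lines in that intersection, again contractible. Applying the nerve theorem gives $\widetilde H_{q-1}(L') \cong \widetilde H_{q-1}(N')$, where the nerve $N'$ has vertex set the hyperplanes of $F^n$ and a $q$-simplex for each $(q+1)$-tuple of hyperplanes with nontrivial common intersection. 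This is exactly the combinatorial description of the nerve $\widetilde N$ of the cover $\{S(H)\}$ of $T_n(A)$ recalled in Section~\ref{ls}, so $\widetilde H_{q-1}(L') \cong \widetilde H_{q-1}(T_n(A))$. The Solomon--Tits theorem together with \eqref{eq:steinberg-def} then furnishes the vanishing for $q\ne n-1$ and the identification with $St_n(A)$ for $q = n-1$.

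The one subtle point—and the main thing to verify carefully—is that in top degree the isomorphism is actually induced by $\phi'$ rather than just being some abstract isomorphism. For this I would use the quotient maps $\theta_k\colon\cS_k\to\cS'_k$ to set up a commutative diagram between the identifications $C_{n+k-1}(K,L)\cong\cC_k(A)$ from Section~\ref{ls} and the analogous identifications $C_{n+k-1}(K',L')\cong\cC'_k(A)$, noting that $\theta$ also induces a simplicial map $(K,L) \to (K',L')$ compatible with the $\cS_k \to \cS'_k$. Since $\phi' \circ \theta_0 = \phi$ by construction, the isomorphism of Lemma~\ref{key} descends through this diagram to the desired isomorphism for $\phi'$, completing the argument.
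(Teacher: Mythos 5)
Your proposal is correct and follows essentially the same route as the paper: cover $L'$ by the contractible subcomplexes $K'_V$ indexed by hyperplanes $V\subset F^n$, identify the nerve with $\widetilde N$, and conclude via Solomon--Tits. The paper handles the final step (that the top-degree isomorphism is induced by $\phi'$) a bit more briefly, simply noting that the apartment $[[X]]$ depends only on the lines spanned by the rows of $X$, which is just a condensed form of your observation that $\phi'\circ\theta_0=\phi$ and the quotient maps $\theta_k$ give a compatible map of pairs $(K,L)\to(K',L')$.
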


\begin{proof}[Proof of Lemma \ref{key'}]

Let $\cH'$ be the set of direct summands of rank $(n-1)$ in $F^n$.
Since $F$ is a field, any $H\in \cH'$ is isomorphic to $F^{n-1}$.  For $H\in\cH'$,
let $K'_H$ denote the subcomplex of $L'$ consisting of all simplices
whose vertices lie in $H'$.  For the same reason that $K'$ is
contractible, so is $K'_H$.  More generally, if $H_1,\dots H_q\in\cH$,
then $H_1\cap\cdots\cap H_q$ is isomorphic to $F^{n-q}$, and
$K'_{H_1}\cap\cdots\cap K'_{H_q}$ is contractible.

Therefore $\{K'_H\}$ is an acyclic cover of $L'$.  Letting $N'$
denote its nerve, we have for all $q\ge0$
$$
H_q(L')\approx H_q(N').
$$
The identity map $H\mapsto H$ defines a simplicial isomorphism
$N'\to\widetilde N$.  We obtain a sequence of $GL(n,F)$-equivariant
isomorphisms
$$
H_q(K',L')\approx \widetilde H_{q-1}(L')\approx
\widetilde H_{q-1}(N')\approx\widetilde H_{q-1}(\widetilde N)
\approx\widetilde H_{q-1}(T_n(A)).
$$

Let $X\in GL(n,F)$ be associated to the modular symbol
$[[X]]\in St_n(A)$\label{modsymbdef}.  The apartment corresponding to $X$ in the Tits
building depends only on the lines in $F^n$ generated by the rows of
$X$.  Thus the map $\phi'$ behaves as claimed, and this proves the
lemma.
\end{proof}

We return now to the proof of Theorem \ref{C'}.  The $(n-2)$-skeletons
of $L'$ and $K'$ are the same, so $C_{n+k-1}(K',L')=0$ if $k<0$.  We
obtain an exact sequence
$$
\cdots\to C_{n+k}(K',L')\to C_{n+k-1}(K',L') \to\cdots\to C_{n-1}(K',L') \to St_n(A)\to 0.
$$

Clearly, for $k\geq 0$ we have isomorphisms
$$
C_{n+k-1}(K')\approx\Z(\cS'_k) \ \  \mbox{and} \  \ C_{n+k-1}(L')\approx\Z(\cP'_k).
$$ 
Therefore $C_{n+k-1}(K',L')=\cC'_k(A)$, and we have an exact sequence
$$
\cdots\to \cC'_{k+1}(A)\to \cC'_{k}(A) \to\cdots\to \cC'_{0}(A) 
\stackrel{\phi'}{\to} St_n(A)\to 0.
$$
This completes the proof of the theorem.
\end{proof}

\section{The sharbly complex}\label{sharblies}

For the purposes of computing Hecke operators, one needs to modify
the construction of Lee and Szczarba to obtain a resolution of
$St_n(A)$ by $GL(n,F)$-modules.  One possibility is to take $A=F$ obtaining a free
$GL(n,F)$-complex $\cC(F)$, thereby allowing in principle an easy
formula for the action of Hecke operators on the homology.  However,
this resolution is far too big to be used for practical computation.

To make Lee and Szczarba's complex ``smaller," we simultaneously
antisymmetrize and factor out the action of scalars on row vectors,
even though in this way we sacrifice freeness.  We obtain the sharbly
complex $Sh_{*}$, which gives a resolution of the Steinberg
module by $GL(n,F)$-modules. In section \ref{voronoi}, we will
introduce much smaller resolutions using Voronoi theory.

Let $\Gamma$ be a subgroup of finite index in $GL(n,A)$.  The sharbly
resolution is $\Gamma$-free if $\Gamma$ is torsionfree, but in general
it is not even $\Gamma$-projective.

The sharbly complex was defined in \cite{A2}, in a slightly different
form from the definition in \cite{AGM4}.  It is straightforward to
see that the two different definitions give naturally isomorphic
complexes of $GL(n,\Z)$-modules.  The advantage of the latter
definition is that there is an obvious $GL(n,\Q)$-action on the
complex.  We give here the latter form, generalized from $\Z$ to an
arbitrary principal ideal domain.

We continue to use the notation of the preceding sections.

\begin{definition}
The \emph{Sharbly complex} $Sh_{*} = Sh_{*} (A)$ is the
complex of $\Z GL(n,F)$-modules defined as follows.   As an abelian
group, $Sh_{k} (A)$ is generated by 
symbols $[v_1,\dots,v_{n+k}]$, where the $v_i$ are nonzero vectors in
$F^n$, modulo the submodule generated by the following relations:

(i) $[v_{\sigma
(1)},\dots,v_{\sigma(n+k)}]-(-1)^\sigma[v_1,\dots,v_{n+k}]$ for all
permutations $\sigma$;

(ii) $[v_1,\dots,v_{n+k}]$ if $v_1,\dots,v_{n+k}$ do not span all
of $F^n$; and

(iii) $[v_1,\dots,v_{n+k}]-[av_1,v_{2},\dots,v_{n+k}]$ for all $a\in
F^\times$.

\noindent The boundary map $\partial \colon Sh_{k} (A) \rightarrow
Sh_{k-1} (A)$ is given by 
\[ 
\partial([v_1,\dots,v_{n+k}])=
\sum_{i=1}^{n+k}[v_1,\dots,\widehat{v_i},\dots v_{n+k}],
\]
where as usual $\widehat{v_i}$ means to delete $v_{i}$.
\end{definition}

Writing $v_1,\dots,v_n$ as row vectors, we let $X(v_1,\dots,v_n)$
denote the matrix with the $v_{i}$ as rows, and put
$[[v_1,\dots,v_{n}]]=[[X(v_1,\dots,v_{n})]]$ as in the proof of Lemma
\ref{key'}.  The map $[v_1,\dots,v_{n}] \mapsto [[v_1,\dots,v_{n}]]$
is constant on the cosets of the group generated by the relations
(i)--(iii) and thus defines a surjective $GL(n,F)$-equivariant map
$\phi_{Sh}\colon Sh_0(A) \to St_n(A)$.

For each line $\ell_i\in\bP^{n-1}(F)$ choose a nonzero unimodular
vector $v_i\in\ell_i\cap A^n$.  The map
$(\ell_1,\dots,\ell_{n+k})\mapsto [v_1,\dots,v_{n+k}]$ extends to a
$GL(n,F)$-equivariant chain map $f\colon\cC'_{*}(A)\to Sh_{*}(A)$ and
$\phi'=\phi_{Sh}\circ f$.

\begin{theorem}\label{Sh}  The following is an exact sequence of $GL(n,F)$-modules:
$$
\cdots\to Sh_k(A) \to Sh_{k-1}(A) \to \cdots \to Sh_0(A) 
\stackrel{\phi_{Sh}}{\to} St_n(A) \to 0.
$$
\end{theorem}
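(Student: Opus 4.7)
The plan is to show that the chain map $f \colon \cC'_{*}(A) \to Sh_{*}(A)$ defined just before the theorem is a chain equivalence. Since $\phi' = \phi_{Sh} \circ f$ and Theorem~\ref{C'} already gives exactness of $\cC'_{*}(A) \to St_n(A) \to 0$, any such chain equivalence will immediately imply the desired exact sequence for $Sh_{*}(A)$.

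The key observation is that, under natural identifications, $\cC'_{*}(A)$ and $Sh_{*}(A)$ are respectively the ordered and the alternating relative simplicial chain complexes of the pair $(K', L')$ from the proof of Theorem~\ref{C'}. By construction $\cC'_{k}(A) = C_{n+k-1}(K', L')$ is the ordered relative simplicial chain complex in Spanier's sense. For the sharblies, the bijection between lines in $F^n$ and $A^{\times}$-orbits of unimodular vectors in $A^n$, combined with relation (iii), lets us rewrite sharbly generators $[v_1, \dots, v_{n+k}]$ as ordered tuples of lines; relation (ii) matches passage to the quotient modulo $\cP'_{k}$, which corresponds to factoring out $L'$; and relation (i) is precisely the alternating relation on ordered chains. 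Under these identifications, $f$ is the canonical quotient map from the ordered to the alternating chain complex.

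I would then invoke the classical fact (\cite[\S4.3]{spanier}) that for any simplicial complex the quotient from the ordered to the alternating simplicial chain complex is a chain equivalence over $\Z$. Naturality of this equivalence with respect to inclusions of subcomplexes, together with a five-lemma argument applied to the long exact homology sequences of the pair, extends it to relative chain complexes. Applied to $(K', L')$, this shows $f$ is a chain equivalence, and combined with $\phi' = \phi_{Sh} \circ f$ this yields Theorem~\ref{Sh}. The main point to verify carefully is the identification of $Sh_{k}(A)$ with the alternating relative chain complex $C^{\mathrm{alt}}_{n+k-1}(K', L')$: in particular one must check that distinct unimodular representatives of the same line produce the same sharbly class (via relation (iii)), and that $\cP'_{k}$ corresponds exactly to the $(n+k-1)$-simplices of $L'$. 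Once these bookkeeping checks are in place, the rest of the argument is formal.
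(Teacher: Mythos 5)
Your proposal is correct and is essentially the paper's own argument: both identify $\cC'_{*}(A)$ and $Sh_{*}(A)$ with the ordered and alternating relative chain complexes of $(K',L')$, invoke the Spanier fact that the ordered-to-alternating projection is a chain equivalence on each simplicial complex, and then use the ladder of long exact sequences of the pair together with the five lemma to transfer the conclusion of Theorem~\ref{C'} from $\cC'_{*}$ to $Sh_{*}$. The only cosmetic difference is that you phrase the conclusion as ``$f$ is a chain equivalence,'' whereas the paper only extracts the induced isomorphism on relative homology (which is all that is needed); note that what is natural is the projection map, not the homotopy inverse, so the five-lemma step should be run on the long exact homology sequences as the paper does rather than on the chain level.
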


\begin{proof} 
We interpret $f$ geometrically as follows.  Since the
$(n-2)$-skeletons of $K'$ and $L'$ are the same, $\cC'_{*}(A)$ is
naturally isomorphic to the complex of ordered chains of the pair of
simplicial complexes $(K',L')$ and the sharbly complex $Sh_{*}(A)$ is
naturally isomorphic to the complex of oriented (antisymmetric) chains
of $(K',L')$.  A standard fact tells us that the complex of ordered
chains on a simplicial complex $Y$ is homotopy equivalent to the
complex of oriented chains on $Y$ (cf.~\cite[Theorem 4.3.8]{spanier}).
We take the long exact sequence in ordered homology coming from the
pair $(K',L')$ with its natural map to the long exact sequence in
oriented homology coming from $(K',L')$.  Using Theorem \ref{C'},
applying the ``standard fact'' to the homology of $K'$ and $L'$
respectively, plus repeated use of the Five Lemma gives the result.
\end{proof}

\begin{definition} 
Let $\Gamma$ be a subgroup of $GL(n,F)$ and $M$ a right
$\Z\Gamma$-module.  Consider $M$ to be a complex concentrated in
dimension 0.  We define the \emph{Steinberg homology} of $\Gamma$ with
coefficients in $M$ to be $H_*(\Gamma,St_{n}(A) \otimes_\Z M)$.  We
define the \emph{sharbly homology} of $\Gamma$ with coefficients in
$M$ to be $H_*(\Gamma,Sh_{*} \otimes_\Z M)$.
\end{definition}

Note that if $\Gamma$ is torsionfree, then Borel--Serre duality
\cite{B-S} implies that the Steinberg homology is isomorphic to the
group cohomology of $\Gamma$.  Also, if $P$ is a projective resolution
of $\Z$ over $\Z\Gamma$, then by definition
$$
H_*(\Gamma,Sh_{*}(A) \otimes_\Z M)= H_*(P\otimes_\Gamma (Sh_{*}(A) \otimes_\Z M))
$$
(well-defined up to canonical isomorphism.)  From Theorems~\ref{C'}
and~\ref{Sh} and from the standard spectral sequences of a double
complex (e.g.~\cite[p.~169]{B}), we obtain spectral sequences
$$
E^1_{p,q}=H_q(\Gamma,C_p\otimes_\Z M) \Rightarrow
H_*(\Gamma,St_n(A)\otimes_\Z M),
$$
where $C_{*}=\cC'_{*}(A)$ or $Sh_{*}(A)$.

\begin{theorem}\label{C-Steinberg}
Assume that $A^\times$ is finite of order $o$ and let $d$ be the
product of all $m$ such that $\Gamma$ has a subgroup of order
$m$.  Assume that $d$ is finite.  Suppose that multiplication by the
greatest common divisor $\gcd(o,d)$ is invertible on $M$.  Then we
have isomorphisms
$$
H_*(\Gamma,C_{*}\otimes_\Z M) 
\approx H_*(\Gamma,St_n(A)\otimes_\Z M)
$$
where $C_{*}=\cC'_{*}(A)$ or $Sh_{*}(A)$.
\end{theorem}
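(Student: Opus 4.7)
The plan is to degenerate the spectral sequence
$$E^1_{p,q} = H_q(\Gamma, C_p \otimes_\Z M) \Rightarrow H_{p+q}(\Gamma, St_n(A) \otimes_\Z M)$$
recalled just before the theorem. I would treat $C_*=\cC'_*(A)$ directly, and handle $C_*=Sh_*(A)$ by transporting along the $\Z\Gamma$-equivariant chain map $f\colon\cC'_*(A)\to Sh_*(A)$ from Section~\ref{sharblies}.

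For $C_*=\cC'_*(A)$, the module $\cC'_p(A)=\Z(\cS'_p\setminus\cP'_p)$ is a permutation $\Z\Gamma$-module, so it decomposes as $\bigoplus_\alpha \Z[\Gamma/H_\alpha]$, where $H_\alpha$ is the $\Gamma$-stabilizer of a representative spanning $(n+p)$-tuple of lines $(\ell_1,\dots,\ell_{n+p})$ in $F^n$. Shapiro's lemma gives
$$H_q(\Gamma, \Z[\Gamma/H_\alpha] \otimes_\Z M) \cong H_q(H_\alpha, M).$$
The key point is to show $H_\alpha\hookrightarrow (A^\times)^n$. Choosing unimodular generators $v_i\in A^n$ of each $\ell_i$, any $h\in H_\alpha$ fixes each line, so $hv_i = \lambda_i v_i$ with $\lambda_i \in F$; preservation of the lattice $A^n$ by $h$ together with the unimodularity of $v_i$ forces $\lambda_i \in A$, while finiteness of $H_\alpha$ (a subgroup of $\Gamma$, with $d<\infty$) forces $\lambda_i$ to be a root of unity, hence $\lambda_i\in A^\times$. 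Restricting to $n$ lines forming a basis of $F^n$ yields the desired embedding, so $|H_\alpha|$ divides $o^n$. Since $|H_\alpha|$ also divides $d$ by definition, every prime dividing $|H_\alpha|$ divides $\gcd(o,d)$, which is invertible on $M$ by hypothesis; thus $|H_\alpha|$ is invertible on $M$ and $H_q(H_\alpha, M)=0$ for $q>0$. Consequently $E^1_{p,q}=0$ for $q>0$, the spectral sequence collapses, and $H_*(\Gamma, \cC'_*(A)\otimes_\Z M)\cong H_*(\Gamma, St_n(A)\otimes_\Z M)$.

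For $C_*=Sh_*(A)$, a direct stabilizer argument is inadequate, because module stabilizers of sharbly symbols can acquire even-permutation contributions whose orders involve primes dividing $(n+p)!$ but not $o$. Instead I would invoke the chain map $f$. By the standard fact \cite[Theorem 4.3.8]{spanier} used in the proof of Theorem~\ref{Sh}, $f$ is a chain homotopy equivalence of $\Z$-complexes; since chain homotopies are preserved by $-\otimes_\Z M$, the map $f\otimes_\Z M$ remains a $\Z$-chain homotopy equivalence, hence a $\Z$-quasi-isomorphism, and it is $\Z\Gamma$-equivariant because $f$ is, so it is a $\Z\Gamma$-quasi-isomorphism. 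Since the hyperhomology $H_*(\Gamma,-)$ defined before the theorem is invariant under $\Z\Gamma$-quasi-isomorphism on bounded-below complexes, we obtain $H_*(\Gamma, \cC'_*(A)\otimes_\Z M)\cong H_*(\Gamma, Sh_*(A)\otimes_\Z M)$, and combining with the previous paragraph completes the $Sh_*$ case.

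The hardest part, I expect, is the rigidity step in the $\cC'_*$ case, verifying $H_\alpha\hookrightarrow (A^\times)^n$: it requires a careful interplay of lattice preservation by $h$, unimodularity of the line generators, and the finite order of $h$, and only together do these force the eigenvalues $\lambda_i$ into $A^\times$. The remainder is standard: permutation-module decomposition plus Shapiro's lemma, vanishing of positive-degree group homology for finite groups of order invertible on $M$, and derived-functor invariance of hyperhomology under quasi-isomorphism.
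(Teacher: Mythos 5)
Your proof is correct and takes essentially the same route as the paper: decompose $\cC'_p(A)$ into a sum of modules induced from stabilizers, bound those stabilizers via the unimodular-eigenvector argument (the paper shows directly that $\gamma^o=1$ for every $\gamma$ in the stabilizer, whereas you give the slightly more structural embedding into $(A^\times)^n$, but the conclusion that every prime dividing the stabilizer order divides $\gcd(o,d)$ is identical), apply Shapiro's lemma to collapse the second spectral sequence at $E^2$, and transport the result to $Sh_*$ via the $\Gamma$-equivariant weak equivalence $f$ (the paper cites Proposition VII.5.2 of Brown for this last step, while you spell out the quasi-isomorphism invariance of hyperhomology directly).
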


\begin{proof}
The chain map $\cC'_{*}(A)\stackrel{f}{\to}Sh_{*}(A)$ is a weak equivalence.
By \cite[Proposition VII.5.2]{B}, the map $f$ induces an isomorphism
on the homology of $\Gamma$ with those coefficients.  So it suffices
to prove the theorem for $C_{*}=\cC'_{*}(A)$.

Use the notation of the proof of Theorem~\ref{Sh}.  A basis for
$\cC'_k(A)$ is given by tuples $B=(\ell_1,\dots,\ell_{n+k})$ such that
the span of the lines $\ell_1,\dots,\ell_{n+k}$ is all of $F^n$.  Let
$Stab_B$ be the stabilizer in $\Gamma$ of $B$ and let $s_B$ be its
order.  Naturally, $s_B$ divides $d$.

We claim that $s_B$ also divides $o^{N}$ for some sufficiently large
integer $N$.  Let $\gamma\in Stab_B$.  Then $\ell_i\gamma=\ell_i$ for
all $i$.  Therefore, for each $i$ there exists $a_i\in A^\times$ such
that $v_i\gamma=a_iv_i$.  The $v_i$ span $F^n$.  It follows that
$\gamma^o=1$.  So any prime dividing the order of any element of
$Stab_B$ divides $o$ and the claim follows.

The $\Gamma$-module $\cC'_k(A)$ is a direct sum of induced
representations, each induced from $Stab_B$ for some $B$.  By
Shapiro's lemma, $H_q(\Gamma,\cC'_p(A)\otimes M)$ is a direct sum of
groups equal to $H_q(Stab_B, M)$ for various $B$.  Since $s_B$ is
invertible on $M$, these groups vanish when $q>0$.  Thus the terms in
the spectral sequence are 0 except when $q=0$, which implies that the
spectral sequence degenerates at $E_2$.  This completes the proof.
\end{proof}

Since $\Z^{\times} = \{\pm 1 \}$, Theorem \ref{C-Steinberg} yields the
following:

\begin{corollary}\label{S-Steinberg}
Let $A=\Z$.  For any $\Gamma\subset GL(n,\Z)$ and any coefficient
module $M$ in which $2$ is invertible, the sharbly homology is
isomorphic to the Steinberg homology.
\end{corollary}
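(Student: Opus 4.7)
The plan is straightforward: apply Theorem \ref{C-Steinberg} with $A=\Z$ and $C_{*}=Sh_{*}(\Z)$, and verify that all of its hypotheses are satisfied under the conditions of the corollary. First I would observe that $\Z^{\times}=\{\pm 1\}$ is finite of order $o=2$. Next I would verify that $d$ is finite: every finite subgroup of $GL(n,\Z)$ has order bounded by a constant depending only on $n$ (for instance by Minkowski's theorem, since $GL(n,\Z)$ contains a torsionfree congruence subgroup of finite index), so only finitely many integers $m$ arise as orders of finite subgroups of $\Gamma$, and hence the product defining $d$ is finite.

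With $o=2$, the integer $\gcd(o,d)$ is either $1$ or $2$. In the first case multiplication by $\gcd(o,d)$ is trivially invertible on any abelian group, and in the second case it is invertible on $M$ precisely by the standing hypothesis that $2$ is invertible on $M$. Thus the hypothesis of Theorem \ref{C-Steinberg} is met, and the theorem yields
$$
H_{*}(\Gamma, Sh_{*}(\Z)\otimes_{\Z} M) \;\cong\; H_{*}(\Gamma, St_{n}(\Z)\otimes_{\Z} M),
$$
which is the claimed identification of sharbly homology with Steinberg homology.

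There is no real obstacle; the corollary is a direct specialization. The only point worth pausing over is the finiteness of $d$, which rests on the classical fact that the orders of finite subgroups of $GL(n,\Z)$ are bounded in terms of $n$. Once that is in hand, the numerical coincidence that $\gcd(2,d)\in\{1,2\}$ makes the invertibility hypothesis reduce exactly to the single condition that $2$ be invertible on $M$, and Theorem \ref{C-Steinberg} does the rest.
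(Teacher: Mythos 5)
Your proof is correct and follows exactly the route the paper intends: the paper's own proof is the one-line observation that $\Z^{\times}=\{\pm 1\}$ allows Theorem~\ref{C-Steinberg} to apply. You have merely filled in the routine verification that $o=2$, that $d$ is finite (Minkowski), and that $\gcd(2,d)$-invertibility on $M$ reduces to $2$-invertibility.
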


It follows from Corollary~\ref{S-Steinberg} that (a) and (d) of
Conjecture 5 in \cite{AGM4} are equivalent, if $p \ne 2$.

\section{The Voronoi complex and its variants}\label{voronoi}

From now on, we put $F=\Q$ and $A=\Z$.  Let $X_n^0$ be the space of
positive definite real $n\times n$ symmetric matrices.  It is an open
cone in the vector space $Y_n^0$ of all real $n\times n$ symmetric
matrices.  For each non-zero subspace $W$ of $\R^n$ defined over $\Q$,
set $b(W)$ to be the rational boundary component of $X_n^0$ consisting
of the cone of all positive semi-definite real $n\times n$ symmetric
matrices whose kernel is $W$.  The (minimal) Satake bordification
$(X_n^0)^*$ of $X_n^0$ is the union of $X_n^0$ with all the rational
boundary components.  It is convex and hence contractible.

\begin{lemma}\label{lem:below}
Let $n\ge 2$.  If $k\ne n-1$, $\widetilde H_k(\partial X_n^*)=0$ and 
$\widetilde H_{n-1}(\partial X_n^*) \approx St_{n}(\Z)$.
\end{lemma}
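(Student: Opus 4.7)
My plan is to prove this by a nerve-theoretic argument, reducing the computation to the homology of the complex $L'$ from Section~\ref{vls}. The strategy parallels the proofs of Lemmas~\ref{key} and~\ref{key'}.

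For each proper non-zero rational subspace $W$ of $\R^n$, set
\[
  U_W := \{A \in \partial X_n^* : \ker(A) \supseteq W\}
       = \bigcup_{W \subseteq W' \subsetneq \R^n} b(W').
\]
First I would check that each $U_W$ is contractible. Pick a reference matrix $A_W \in b(W)$. The straight-line homotopy $F(A,t) = (1-t)A + t A_W$ stays inside $U_W$ because the kernel of a positive combination of PSD matrices is the intersection of the summands' kernels, and hence still contains $W$; this contracts $U_W$ onto $\{A_W\}$. Since $U_{W_1}\cap\cdots\cap U_{W_q} = U_{W_1+\cdots+W_q}$, every finite intersection is either contractible (when the sum is a proper subspace) or empty (when the sum is all of $\R^n$).

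Next I would apply the nerve theorem to the cover $\{U_\ell\}_\ell$ indexed by the rational lines $\ell \subseteq \Q^n$. This cover is well-defined because every boundary stratum $b(W)$ has $W \ne 0$ and hence contains some rational line. The nerve $\mathcal N$ is then the simplicial complex whose vertices are rational lines in $\Q^n$ and whose simplices are finite sets of lines spanning a proper subspace of $\R^n$. This is precisely the subcomplex $L'$ of $K'$ introduced in Section~\ref{vls}. Invoking Lemma~\ref{key'}, the acyclicity of $K'$, and the long exact sequence of the pair $(K',L')$ transfers the isomorphism $H_{n-1}(K',L') \iso St_n(\Z)$ to the reduced homology of $L'$, and hence of $\partial X_n^*$, in the claimed degree; the vanishing in other degrees follows similarly.

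I expect the main obstacle to be the careful application of the nerve theorem in this setting: the $U_\ell$ as defined are closed (not open), and $\partial X_n^*$ is a non-compact singular stratified space rather than a CW complex in the standard sense. The usual workaround is to replace each $U_\ell$ by an open neighborhood in $\partial X_n^*$ that deformation retracts onto it and whose finite intersections remain contractible; the convex cone structure of the PSD strata should make this routine, but the bookkeeping needs attention. A secondary worry is pinning down the exact homological degree through the identification $\partial X_n^* \simeq L'$, since a shift by one depending on whether the apex of the PSD cone or the points at infinity are included in the bordification would need to be accounted for; cross-checking against $n = 2$ or $n = 3$ would be prudent.
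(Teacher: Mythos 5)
Your argument follows the same route as the paper's proof: cover $\partial X_n^*$ by contractible strata indexed by one-dimensional (equivalently, codimension-one, after taking $\perp$) rational data, observe that all nonempty finite intersections are contractible, and read off the homology of the space from the nerve. The paper takes the pieces $X(H) = \{A : \ker A \supseteq H^{\perp}\}$ indexed by hyperplanes $H$, identifies the nerve with $\widetilde N$ (and hence with $T_n(\Z)$ up to homology), and invokes the covering spectral sequence from \cite[Theorem VII.4.4]{B}. You take the same sets indexed by lines and identify the nerve with $L'$; since $\widetilde H_{q-1}(L') \cong \widetilde H_{q-1}(\widetilde N) \cong \widetilde H_{q-1}(T_n)$ by Lemma~\ref{key'}, these are interchangeable. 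Your worry about the closed-vs-open cover is not really an obstacle: one applies the Mayer--Vietoris spectral sequence for a cover of a CW complex by subcomplexes (which is precisely what $\cite[Theorem VII.4.4]{B}$ does), the Voronoi cellulation making $\partial X_n^*$ a regular CW complex and each $X(H)$ a subcomplex, so no thickening to open sets is needed.

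Your ``secondary worry'' about the homological degree, however, is well founded and should not be dismissed. The nerve computation gives $\widetilde H_q(\partial X_n^*) \cong \widetilde H_q(L') \cong \widetilde H_q(\widetilde N)$, and the Solomon--Tits theorem locates $St_n$ in degree $n-2$ of these complexes, not $n-1$. Checking $n=2$ confirms this: $\partial X_2^*$ is a discrete set of points (one for each rational boundary line), so $\widetilde H_0 \cong St_2$ while $\widetilde H_1 = 0$, i.e.\ degree $n-2 = 0$. Thus both your argument and the paper's own argument prove $\widetilde H_{n-2}(\partial X_n^*) \cong St_n(\Z)$, one degree below what Lemma~\ref{lem:below} states. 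This is a typo in the lemma's statement; in the application (Theorem~\ref{V'}), the sentence ``$\widetilde H_k(\cV_*) \approx \widetilde H_k(\partial X_n^*)$'' should read ``$H_k(\cV_*) \approx \widetilde H_{k-1}(\partial X_n^*)$'' (this is what the long exact sequence of the pair actually gives, since $X_n^*$ is contractible), and the two off-by-one shifts cancel, so Theorem~\ref{V'} and everything downstream are unaffected.
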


\begin{proof}
Let $W$ be a nonzero, proper subspace of $\Q^n$, thus a vertex of the
Tits building $T_n(\Z)$.  Let $X(W)$ is the subset of $X_n^*$
consisting of all semidefinite symmetric matrices whose kernel
contains $W^\perp$.  Note that $X(W)$ is homeomorphic to $X_{\dim
W}^*$, and hence contractible.  Then $\partial X_n^*$ is covered by
the set of $X(H)$ where $H$ runs over hyperplanes of $\Q^n$.  Also, if
$H_1,\dots,H_j$ are hyperplanes, then $X(H_1)\cap \dots \cap X(H_j)$
is nonempty if and only if $H_1\cap \dots \cap H_j \ne \{0\}$, in
which case $X(H_1)\cap \dots \cap X(H_j)=X_{H_1\cap \dots \cap H_j}$.
Thus, the $X(H)$ form an acyclic covering of $\partial X_n^*$ whose
nerve is $T_n(\Z)$.  The result now follows from the spectral sequence
for a covering, see e.g.~Theorem VII.4.4, p. 168 of~\cite{B}.
\end{proof}

If $n\le 4$, we will produce a resolution of $St_{n}(\Z)$
based on the Voronoi decomposition of $(X_n^0)^*$.  
Expositions of the Voronoi decomposition are given in
\cite[II.6]{AMRT} and \cite[Appendix]{G}.

The positive real numbers $\R_+^\times$ act on $(X_n^0)^*$ by
homotheties.  Set $X_n^*=(X_n^0)^*/\R_+^\times$.

If $v\in \Z^n$ is a unimodular row vector, then ${}^tvv$ is a rank 1
matrix in $(X_n^0)^*$ and thus generates a rational boundary
component.  If $v_1,\dots,v_m$ are $m$ such vectors, we let
$\sigma(v_1,\dots,v_m)$ denote the image of the closed convex conical
hull of ${}^tv_1v_1,\dots,{}^tv_mv_m$ in $X_n^*$.

The Voronoi decomposition of $X_n^*$ is the cellulation of $X_n^*$ by
the cells $\sigma_Q=\sigma(v_1,\dots,v_m)$, where $Q$ runs over all
positive definite real $n\times n$ quadratic forms, and where the
nonzero integral vectors that minimize $Q$ over all integral vectors
are exactly $\pm v_1,\dots,\pm v_m$.  This includes the cells in the
rational boundary components of $X_n^*$.  There is a right action of
$GL(n,\Z)$ on $X_n^*$ induced by the action on $X_n^0$: $x\cdot\gamma
= {}^t\gamma x \gamma$.  The Voronoi decomposition is stable under
this action.

A basic fact is that there are a finite number of Voronoi cells modulo
$SL (n, \Z)$.  We will later need to refer to the representatives of
the $SL(n,\Z)$-orbits of some of the Voronoi cells for $n=3,4$ as
tabulated in \cite{M}.  For $n\le 4$, there is only one Voronoi cell
(modulo $SL(n,\Z)$) that is not a simplex, and that occurs in the top
dimension when $n=4$.

Let $V_n$ denote $X_n^*$ considered as a cell complex, with the
Voronoi cellulation.  Denote by $\Z V_{*}$ the oriented chain complex of
$V_n$.  That is, we fix an orientation on each cell of $V_n$.  Then
$(\Z V)_r$ is the free abelian group generated by the oriented cells of
dimension $r$, and the boundary map from $(\Z V)_r$ to $(\Z V)_{r-1}$
sends an oriented $r$-cell to the linear combination of the
$(r-1)$-cells on its boundary that keeps track of the chosen
orientations.

A Voronoi cell $\sigma(v_1,\dots,v_{k})$ lies in a boundary
component of $X_n^*$ if and only if $v_1,\dots,v_{k}$ do not span
$\Q^n$.  Let $\partial X_n^*$ denote the union of all the boundary
components, and let $\partial V_n$ denote $\partial X_n^*$ considered
as a cell complex, with the Voronoi cellulation.  Denote by
$\Z \partial V_{*}$ the oriented chain complex of $\partial V_n$.

Let $\cV_r= \Z V_r /\Z\partial V_r$. Write $((v_1,\dots,v_{k}))$ for
the generator in $\cV_*$ corresponding to the cell
$\sigma(v_1,\dots,v_k)$.  The boundary maps in all these chain
complexes are induced by the boundary maps on the Voronoi cells.

\begin{definition}\label{def:voronoi.homology}
For any coefficient module $M$ and any subgroup $\Gamma \subset GL
(n,\Z)$, the \emph{Voronoi homology} of $\Gamma$ with coefficients in
$M$ is defined to be the homology $H_*(\Gamma, \cV_*\otimes_\Z M)$.
\end{definition}

Now let $n\le 4$.  Then there is only one $GL(n,\Z)$-orbit of
$(n-1)$-dimensional cells in $V_n$.  (See~\cite{M}.)  They are all of
the form $\sigma(w_1,\dots,w_n)$, where $w_1,\dots,w_n$ is a
$\Z$-basis of $\Z^n$.  We define a linear map
$\phi\colon\cV_{n-1}\to St_n(\Z)$ by sending
\begin{equation}\label{eqn:phidot}
\phi((w_1,\dots,w_n))\mapsto [[w_1,\dots,w_n]].
\end{equation}
This is clearly $GL(n,\Z)$-equivariant.

\begin{theorem}\label{V'} Let $n\le 4$.   Then
\begin{equation}\label{eqn:sequence}
0\to\cV_{n(n+1)/2-1}\to\cdots\to \cV_k \to
\cV_{k-1} \to \cdots \to \cV_{n-1}
\stackrel{\phi}{\to} St_n(\Z) \to 0
\end{equation}
is an exact sequence of $GL(n,\Z)$-modules.
\end{theorem}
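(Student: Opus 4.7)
My approach is to view $\cV_*$ as the relative cellular chain complex of the pair $(X_n^*,\partial X_n^*)$ in the Voronoi CW structure, extract the homology of $\cV_*$ from the long exact sequence of the pair, and identify the resulting connecting homomorphism with $\phi$.

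The Voronoi decomposition endows $X_n^*$ with a CW structure in which $\partial V_n$ is a subcomplex, yielding a short exact sequence of chain complexes
\[
0 \to \Z\partial V_* \to \Z V_* \to \cV_* \to 0.
\]
The space $X_n^*$ is contractible: intersecting the convex cone $(X_n^0)^*$ with the affine hyperplane of trace-one matrices gives a compact convex slice homeomorphic to $X_n^*$. Hence $\Z V_*$ computes the homology of a point. By Lemma~\ref{lem:below} combined with the Solomon--Tits theorem, the reduced homology of $\partial X_n^*$ is concentrated in degree $n-2$ and equals $St_n(\Z)$. Moreover, a Voronoi cell $\sigma(v_1,\dots,v_m)$ outside $\partial X_n^*$ has $v_1,\dots,v_m$ spanning $\Q^n$; extracting a basis subset shows the rank-one matrices ${}^tv_iv_i$ span a subspace of $Y_n^0$ of dimension at least $n$, so $\dim\sigma\ge n-1$, giving $\cV_k=0$ for $k<n-1$.

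The long exact sequence of the pair $(V_n,\partial V_n)$ then collapses to an isomorphism $\delta\colon H_{n-1}(\cV_*)\xrightarrow{\sim}\widetilde H_{n-2}(\partial X_n^*)\cong St_n(\Z)$ induced by the connecting homomorphism, with $H_k(\cV_*)=0$ for $k\neq n-1$. Thus exactness of the sequence in the theorem reduces to verifying $\delta=\phi$. To do so I would take a generator $((w_1,\dots,w_n))\in\cV_{n-1}$ with $w_1,\dots,w_n$ a $\Z$-basis of $\Z^n$, lift it to the simplex $\sigma(w_1,\dots,w_n)\in\Z V_{n-1}$, and compute its cellular boundary $\sum_i(-1)^i\sigma(w_1,\dots,\widehat{w_i},\dots,w_n)\in\Z\partial V_{n-2}$. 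The $i$-th summand sits in the boundary piece $X(H_i)$ for the hyperplane $H_i=\langle w_j : j\neq i\rangle$. Running this $(n-2)$-cycle through the acyclic-covering quasi-isomorphisms from the proof of Lemma~\ref{lem:below} (to the nerve of $\{X(H)\}$, then to the Tits building $T_n(\Z)$, then to $St_n(\Z)$) yields, up to an overall sign absorbed by the orientation convention on the top cell, the fundamental cycle of the apartment of the basis $w_1,\dots,w_n$ in the Tits building, i.e.\ the modular symbol $[[w_1,\dots,w_n]]$.

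The hardest step is this last identification: tracking orientations and signs through the composite of quasi-isomorphisms relating the relative cellular complex, the nerve of $\{X(H)\}$, the Tits building $T_n(\Z)$, and the Steinberg module, so as to conclude $\delta=\phi$ rather than only $\delta=\pm\phi$. Once this match is secured, exactness of the theorem's sequence is a formal consequence of the long exact sequence computation, and the restriction $n\le 4$ enters only to guarantee that all $(n-1)$-cells of $V_n$ are simplices $\sigma(w_1,\dots,w_n)$ coming from a single $GL(n,\Z)$-orbit, so that the formula defining $\phi$ actually specifies a map on all of $\cV_{n-1}$.
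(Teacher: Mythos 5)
Your argument is essentially the paper's proof: contractibility of $X_n^*$ plus the long exact sequence of the pair $(X_n^*,\partial X_n^*)$, the computation of $\widetilde H_*(\partial X_n^*)$ from Lemma~\ref{lem:below}, the observation that $\cV_k=0$ for $k<n-1$ because the low-dimensional Voronoi skeleton lies in $\partial X_n^*$, and $GL(n,\Z)$-equivariance to identify the augmentation with $\phi$ by tracing a single top cell through the isomorphisms. One small remark: your degree bookkeeping (reduced homology of $\partial X_n^*$ concentrated in degree $n-2$, with the connecting homomorphism then landing $H_{n-1}(\cV_*)$ onto it) is the precise version; the printed statement of Lemma~\ref{lem:below} and the line ``$\widetilde H_k(\cV_*)\approx\widetilde H_k(\partial X_n^*)$'' in the paper's proof carry a compensating off-by-one in the index, so the conclusion is unaffected.
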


\begin{proof}
By Lemma \ref{lem:below} we know that $\widetilde
H_{k}(\partial X_n^*) \approx St_{n} (\Z)$ for $k=n-1$ and vanishes in
all other degrees. Thus, the exact homology sequence of the
pair $(X_n^*,\partial X_n^*)$ and the contractibility of $X_n^*$ imply
that $\widetilde H_k(\cV_{*}) \approx 
\widetilde H_k(\partial
X_n^*)$ for all $k$.  
The $(n-2)$-skeleton of $X_n^*$ lies in $\partial
X_n^*$.  Therefore $\cV_k=0$ for $k\le n-2$.  Thus all of
$\cV_{n-1}$ consists of cycles, and its quotient by the image
of the differential from $\cV_{n}$ is isomorphic to
$\widetilde H_{n-1}(\partial X_n^*) \approx St_{n}(\Z)$.

Let $(e_1,\dots,e_n)$ be the standard basis of $\Q^n$.  Following out
the isomorphisms in the paragraph above, we see that
$((e_1,\dots,e_n))\in \cV_{n-1}$ is sent to the class of the
submanifold of positive semidefinite diagonal matrices in 
$\partial X_n^*$, which goes to the modular symbol $[[I_n]]$ in $St_{n}(\Z)$.
Therefore, using $GL(n,\Z)$-equivariance, the composite is the map
$\phi$ from \eqref{eqn:phidot} as desired.

Since $\widetilde H_k(\partial X_n^*)$ vanishes for $k\ge n$, the rest
of the sequence \eqref{eqn:sequence} is exact.
\end{proof}

\begin{corollary}\label{V-Steinberg}
Let $n\le 4$ and $\Gamma\subset GL(n,\Z)$.  Let $d$ be the greatest
common divisor of the orders of the finite subgroups of $\Gamma$.  For
any coefficient module $M$ on which multiplication by $d$ is
invertible, we have isomorphisms
$$
H_*(\Gamma,\cV_{*}\otimes_\Z M) \approx
H_*(\Gamma,St_n(\Z)\otimes_\Z M).
$$
\end{corollary}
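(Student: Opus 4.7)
The plan is to recycle the argument of Theorem~\ref{C-Steinberg} with the Voronoi resolution of Theorem~\ref{V'} playing the role of $\cC'_{*}(\Z)$. The exact sequence
\[
0\to\cV_{n(n+1)/2-1}\to\cdots\to \cV_{n-1}\stackrel{\phi}{\to}St_n(\Z)\to 0
\]
is a sequence of $GL(n,\Z)$-modules in which each $\cV_p$ is a free abelian group (on the set of oriented Voronoi $p$-cells whose vectors span $\Q^n$, i.e.\ the cells not killed when passing from $\Z V_p$ to $\cV_p$). Consequently, tensoring over $\Z$ with $M$ preserves exactness, and $\cV_{*}\otimes_\Z M$ is a resolution of $St_n(\Z)\otimes_\Z M$ by $\Z\Gamma$-modules. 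Tensoring this resolution, over $\Z\Gamma$, with a projective resolution of $\Z$ and running the standard spectral sequence of a double complex (exactly as in the proof of Theorem~\ref{C-Steinberg}) then yields a spectral sequence
\[
E^1_{p,q}=H_q(\Gamma,\cV_p\otimes_\Z M)\Rightarrow H_{p+q}(\Gamma,St_n(\Z)\otimes_\Z M).
\]

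Next I would analyse $\cV_p$ as a $\Z\Gamma$-module. Since there are only finitely many $GL(n,\Z)$-orbits of Voronoi cells, and $\Gamma$ has finite index in $GL(n,\Z)$, there are only finitely many $\Gamma$-orbits of generators of $\cV_p$. Picking one representative $\sigma$ in each orbit gives
\[
\cV_p\;\cong\;\bigoplus_{[\sigma]}\Ind_{Stab_{\Gamma}(\sigma)}^{\Gamma}\epsilon_\sigma,
\]
where $\epsilon_\sigma\colon Stab_\Gamma(\sigma)\to\{\pm1\}$ records the action of the stabilizer on the chosen orientation of $\sigma$. Because every generator of $\cV_p$ lives in the interior piece $X_n^0/\R_+^\times$, on which $GL(n,\Z)$ acts properly discontinuously, each $Stab_\Gamma(\sigma)$ is a finite subgroup of $\Gamma$, so its order divides $d$. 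Shapiro's lemma identifies
\[
H_q(\Gamma,\cV_p\otimes_\Z M)\;=\;\bigoplus_{[\sigma]}H_q\bigl(Stab_\Gamma(\sigma),\,\epsilon_\sigma\otimes_\Z M\bigr),
\]
and the hypothesis that multiplication by $d$ is invertible on $M$ forces these groups to vanish whenever $q>0$.

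With $E^1_{p,q}=0$ for $q>0$, the spectral sequence degenerates at $E^2$, and the edge map produces the asserted isomorphism $H_*(\Gamma,\cV_{*}\otimes_\Z M)\cong H_*(\Gamma,St_n(\Z)\otimes_\Z M)$. The main point that needs care is the orientation bookkeeping: one must verify that the action of $Stab_\Gamma(\sigma)$ on the orientation module is by a genuine character, particularly for the unique top-dimensional non-simplicial Voronoi cell that appears when $n=4$ (tabulated in~\cite{M}). Once this identification is in place, Shapiro's lemma applies uniformly, and the remainder of the argument is a direct transcription of the vanishing-plus-degeneration step from the proof of Theorem~\ref{C-Steinberg}.
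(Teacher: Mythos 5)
Your proof is correct and follows the same approach as the paper's, which simply defers to the argument of Theorem~\ref{C-Steinberg}; you additionally spell out the one wrinkle specific to the Voronoi complex, the orientation character $\epsilon_\sigma$ on cell stabilizers, which does not appear for $\cC'$ because the stabilizer of an ordered tuple of lines acts trivially on the corresponding basis vector. This is indeed the right thing to notice, but it needs no special verification even for the non-simplicial top cell when $n=4$: any finite group acting on a cell acts on its rank-one orientation module by a genuine character valued in $\{\pm1\}$, and Shapiro's lemma together with the vanishing of positive-degree homology of a finite group over a module on which its order is invertible apply unchanged to this twisted coefficient module.
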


\begin{proof}
The proof runs along the same lines as the proof of
Theorem~\ref{C-Steinberg}.  
\end{proof}

\section{Voronoi sharbly homology classes and Hecke
eigenvalues}\label{AGM}

For the remainder of this article, we take $n=3$ or $4$.  We set $m=3$
if $n=3$ and $m=5$ if $n=4$.  Then every Voronoi cell of dimension
$\le m$ is a simplex.  To simplify notation, we regrade the Voronoi
complex and put $\cW_{k}=\cV_{k+n-1}$.  We also drop the $\Z$ from the
notation for $\cC'$, $Sh$ and $St_{n}$ and omit subscripts for
complexes.

For $0\le k \le m$, define the map of $\Z[GL(n,\Z)]$-modules
$$
\theta_k\colon \cW_{k} \to Sh_{k}
$$
as follows: if $\sigma(v_1,\dots,v_{k+n})$ is a Voronoi cell, it is in
fact a simplex, and we set
$\theta_k((v_1,\dots,v_{k+n}))=[v_1,\dots,v_{k+n}]$.  Note that
$\theta_0$ commutes with the maps to $St_{n}$.

In~\cite{AGM4}, where $n=4$, we wished to compute the homology $H_1(Sh)$
(as a Hecke module) of the complex
$$
Sh_2\otimes_{\Z[\Gamma]} M \to Sh_1\otimes_{\Z[\Gamma]} M \to Sh_0\otimes_{\Z[\Gamma]} M
$$
with $\Gamma$ a congruence subgroup of $SL(4,\Z)$ and $M=\Z$ with
trivial $\Gamma$-action.  Similar computations are anticipated for
$n=3$ in the near future.

What we actually computed in~\cite{AGM4} was the homology $H_1(\cW)$ of
$$
\cW_2\otimes_{\Z[\Gamma]} M \to \cW_1\otimes_{\Z[\Gamma]} M
\to \cW_0\otimes_{\Z[\Gamma]} M.
$$
Let $T$ be a Hecke operator and $\{x_i\}$ a basis of $H_1(\cW)$.
Using the algorithm in \cite{experimental}, we found elements $y_i$ in
$H_1(\cW)$ homologous in $H_1(Sh)$ to $\theta_{1,\ast}(x_i)$.  Then we
found the eigenvalues of the linear map that sends $x_i$ to $y_i$.  We
did this because we do not have a good way to compute the action of
$T$ directly on the Voronoi homology.  The reason is that a direct
computation of $T$ would require acting by integral matrices with
determinant greater than 1, and such matrices do not stabilize the
Voronoi decomposition.  See~\cite{AGM4} for more details.

In \cite[\S 5]{AGM4}, we stated that there would be a problem if
$\theta_{1,\ast}$ is not injective, for then some of the Hecke
``eigenvalues" we computed would be meaningless.  However, this was
not accurate.  As far as we know now, $\theta_{1,\ast}$ could fail to
be injective.  (It would be interesting to decide this point.)
Nevertheless, the Hecke eigenvalues we computed in \cite[\S 5]{AGM4}
are in fact actual eigenvalues in the sharbly homology as defined in
Definition 7.  The rest of this section is devoted to proving this
fact.

We refer to Lemma I.7.4 in \cite{B} as FLHA, or the fundamental lemma
of homological algebra. 

Let $A$ be a ring and $M$ a right $A[\Gamma]$-module.  Let $K$
stand for one of the complexes $\cC$, $\cW$ or $Sh$.  Each of these is
a resolution of $St$.  We give $K\otimes_\Z M$ the diagonal
$\Gamma$-action.  Let $F$ be a resolution of $\Z$ by free
$\Gamma$-modules.  Form the double complex
$$
D(A):=F\otimes_{\Z[\Gamma]} (K\otimes_\Z M).
$$ 
It is an $A$-module through the $A$-action on $M$.  We have
$D_{pq}(K)= F_q\otimes_{\Z[\Gamma]} (K_p\otimes_\Z M)$.  The boundary
maps are denoted as $\partial_1\colon D_{pq}(K)\to D_{p-1,q}(K)$ and
$\partial_2\colon D_{pq}(K)\to D_{p,q-1}(K)$.  The total differential
$\partial$ on $D_{pq} (K)$ is given by $\partial =\partial_1 +(-1)^p
\partial_2$.  Then $H(K):=H(\Gamma,K\otimes_\Z M)$ is the total
homology of $D(K)$.

Referring for example to section 5 of chapter VII of \cite{B}, we have
two spectral sequences that both abut to $H(K)$.  We will refer to the
``first spectral sequence" as the one whose $E^2$ page is
$$
{}_IE^2_{pq}(K)=H_q(\Gamma,H_p(K)) \Rightarrow H_{p+q}(K);
$$
and to the ``second spectral sequence" as the one whose $E^1$ page is
$$
{}_{II}E^1_{pq}(K)=H_q(\Gamma,C_p(K)) \Rightarrow H_{p+q}(K).
$$

Let $S$ be a subsemigroup of $GL(n,\Q)$ such that $(\Gamma,S)$ is a
Hecke pair.  From now on we assume that $M$ is a right $S$-module and
that $F$ is a resolution of $\Z$ consisting of $S$-modules.  Consider
$s\in S$ and $T$ the Hecke operator $\Gamma s\Gamma$. Then the action
of $T$ on $H(K)$ can be computed on the chain level as follows.

Write $\Gamma s\Gamma=\coprod s_i\Gamma$ as a finite disjoint union of
single cosets, with $s_i\in S$.  If $x$ is a cycle in
$(D(K),\partial)$, denote the class in $H(K)$ that it represents by
$[x]$.  Then $[\sum xs_i]=[x]T$.  This gives us a non-canonical
lifting of $T$ to the chain level.  In other words, write ${\bf s} =
\sum s_i$ and view it as an operator on the right on $D(K)$.  Then
${\bf s}$ commutes with the boundary operators and $[x]T=[x\bf s]$ for
any cycle $x$.

Let $\tau_mK$ be the truncation of $K$ at degree $m$, i.e.~the complex
$K_m\to K_{m-1}\to\cdots\to K_0$.  The maps $\theta_k$ above define a
map of complexes $\theta\colon \tau_m\cW\to \tau_mSh$.  Since $\cC$ consists
of free $\Z[\Gamma]$-modules, the FLHA gives us a map of
$\Z[\Gamma]$-complexes $\phi_\cW\colon \cC\to\cW$ that commutes with the
augmentation maps to $St$.  We obtain the map of
$\Z[\Gamma]$-complexes $\theta\circ\phi_\cW\colon \tau_m\cC\to \tau_mSh$.
Using the FLHA, we can extend this to a map of $\Z[\Gamma]$-complexes
$\phi_{Sh}\colon \cC\to Sh$ so that
$(\phi_{Sh})_k=\theta\circ(\phi_\cW)_k$ if $k\le m$.  Again,
$\phi_{Sh}$ commutes with the augmentation maps to $St$.

It follows that $\phi_{\cW}$ and $\phi_{Sh}$ are weak equivalences of
$\Gamma$-chain complexes.  Therefore, by Proposition VII.5.2 in
\cite{B} (which uses the first spectral sequence), they induce
isomorphisms $H(\cC)\stackrel{\sim}{\to} H(\cW)$ and $H(\cC)
\stackrel{\sim}{\to} H(Sh)$ respectively.

\begin{theorem}\label{nofake}
Let $x\in D_{1,0}(\cW)$ be a chain such that $\partial_1(x)=0$ and
$[x]\in {}_{II}E^2_{1,0}(\cW)-\{0\}$.  Let $T$ be a Hecke operator,
$a\in K$ and assume that $[\theta(x)]T=a[\theta(x)]$.  Then there
exists $\xi\in D_{0,1}(Sh)$ such that (i) $\xi+\theta(x)$
is a cycle in $D_1(Sh)$ representing a nonzero class $z\in H_1(Sh)$,
and (ii) $zT=az$.
\end{theorem}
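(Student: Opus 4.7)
The plan is to show that one can simply take $\xi = 0$, so that $z := [\theta(x)]$ itself is the desired nonzero Hecke eigenvector. First, observe that $\theta(x) \in D_{1,0}(Sh)$ is automatically a total cycle: $\partial_2 x$ lies in $D_{1,-1}=0$, so the total differential on $D_{1,0}(Sh)$ reduces to $\partial_1$, and $\partial_1 \theta(x) = \theta(\partial_1 x) = 0$ since $\theta$ is a chain map on the truncated range $\tau_m$ (and $m \ge 3$). Thus $\theta(x)$ defines a class $z \in H_1(Sh)$, and the Hecke hypothesis gives $zT = az$ at once. The whole problem therefore reduces to showing that $z \ne 0$.

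To promote the hypothesis $[x] \ne 0$ in ${}_{II}E^2_{1,0}(\cW)$ to nonvanishing of the class of $x$ in $H_1(\cW)$, note that no nonzero $d^r$ for $r \ge 2$ can enter or leave the $(1,0)$ position in the second spectral sequence (both $E^r_{1-r,r-1}$ and $E^r_{1+r,1-r}$ vanish for $r \ge 2$), so ${}_{II}E^\infty_{1,0}(\cW) = {}_{II}E^2_{1,0}(\cW)$. The filtration short exact sequence
\[
0 \to {}_{II}E^\infty_{0,1}(\cW) \to H_1(\cW) \to {}_{II}E^\infty_{1,0}(\cW) \to 0
\]
then shows that because the image of the total cycle $x$ in the right-hand quotient is the given nonzero class, its homology class in $H_1(\cW)$ is nonzero.

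Next I would transport this class to $\cC$ and then push forward through $\phi_{Sh}$. Using the isomorphism $H_1(\cC) \xrightarrow{\sim} H_1(\cW)$ induced by $\phi_\cW$ (Proposition VII.5.2 in \cite{B}), choose a cycle $\alpha \in D_1(\cC)$ representing the preimage of the class of $x$; then $\phi_\cW(\alpha) = x + \partial \beta$ for some $\beta \in D_2(\cW)$. Since $\alpha$ and $\beta$ involve only chain degrees $0,1,2 \le m$, the map $\theta$ is defined on every bigraded piece, and combining the chain-level identity $\phi_{Sh}|_{\tau_m \cC} = \theta \circ \phi_\cW|_{\tau_m \cC}$ with $\theta \partial = \partial \theta$ (valid because $\theta$ is $\Gamma$-equivariant, so commutes with $\partial_2$, and a chain map on $\tau_m$, so commutes with $\partial_1$) yields
\[
\phi_{Sh}(\alpha) = \theta(\phi_\cW(\alpha)) = \theta(x) + \partial \theta(\beta).
\]
Thus $\phi_{Sh}(\alpha)$ and $\theta(x)$ are homologous in $D(Sh)$, so $[\theta(x)] = [\phi_{Sh}(\alpha)]$ in $H_1(Sh)$. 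Since $\phi_{Sh}$ induces an isomorphism on $H_1$ and the class of $\alpha$ is nonzero, we conclude $[\theta(x)] \ne 0$. Setting $\xi = 0$ then gives the required nonzero eigenvector $z = [\theta(x)]$.

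The main obstacle I anticipate is the chain-level bookkeeping behind the identity $\phi_{Sh}(\alpha) = \theta(x) + \partial \theta(\beta)$: one must decompose $\alpha$ and $\beta$ into their bigraded components $\alpha_{p,q}$ and $\beta_{p,q}$, verify that each piece stays inside the truncated range where $\theta$ is defined, and check the compatibility of $\theta$ with both $\partial_1$ and $\partial_2$, so that the equality actually holds at the chain level rather than merely after passage to homology. Once this verification is carried out, the theorem follows formally with $\xi = 0$.
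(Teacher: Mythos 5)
Your part (i) is correct and is actually a mild simplification of the paper's argument: since $\partial_2 x \in D_{1,-1}(\cW)=0$ automatically and $\partial_1 x=0$ by hypothesis, $x$ is already a total cycle, so the auxiliary chain $\eta$ that the paper introduces can indeed be taken to be $0$, and hence $\xi=0$ is a legitimate choice. Your spectral-sequence check that ${}_{II}E^2_{1,0}={}_{II}E^\infty_{1,0}$ and the transport through $\phi_\cW$ and $\phi_{Sh}$ via a homology $\alpha$ with $\phi_\cW(\alpha)=x+\partial\beta$ are also fine, and match the paper's reasoning for nonvanishing of $z$.

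The gap is in part (ii). You treat the hypothesis ``$[\theta(x)]T=a[\theta(x)]$'' as an equation in $H_1(Sh)$, from which $zT=az$ is of course immediate once $z=[\theta(x)]$. But that reading would make part (ii) of the theorem vacuous, and it does not match what was actually computed. In the paper the hypothesis is unpacked on the chain level as: there exist $u\in D_{1,1}(Sh)$ and $y\in D_{2,0}(Sh)$ with
\[
\partial_1 y + \theta(x){\bf s} - \partial_2 u = a\,\theta(x),
\]
which is exactly the statement that $[\theta(x)]T=a[\theta(x)]$ holds in ${}_{II}E^2_{1,0}(Sh)\cong H_1(Sh)/F_0H_1(Sh)$, \emph{not} in $H_1(Sh)$ itself. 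The whole point of the theorem is to promote this filtered (i.e.\ $E^2$-page) eigenvalue relation to a genuine eigenvalue relation on $H_1(Sh)$. Concretely, with $\xi=0$ one still must produce $w\in D_{0,2}(Sh)$ with $\partial_2 w=-\partial_1 u$ (using $\partial_2\partial_1 u=\partial_1\partial_2 u=0$ and exactness of the relevant column), and then verify that
\[
\theta(x){\bf s}-a\,\theta(x)=-\partial_1 y+\partial_2 u = \partial(-y-u-w)
\]
in the total complex. That computation is precisely what the paper carries out, and it is what you have omitted. So your choice $\xi=0$ is sound and streamlines the proof of (i), but (ii) still requires the chain-level argument; it does not follow ``at once'' from the hypothesis.
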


\begin{proof}
Since ${}_{II}E^2_{1,0}(\cW)={}_{II}E^\infty_{1,0}(\cW)$, we know that
$[x]$ persists nonzero to ${}_{II}E^\infty(\cW)$.  In other words
there exists $\eta\in D_{0,1}(\cW)$ such that $\eta+x$ is a cycle in
$D_1(\cW)$ and represents a nonzero class in $H_1(\cW)$.  By the
paragraph preceding the theorem, we obtain that
$\theta_*(\eta+x)=\phi_{Sh,*}\circ\phi_{\cW,*}\inv(\eta+x)$ is a cycle
in $D_1(Sh)$ and represents a nonzero class in $H_1(Sh)$.

We take $\xi=\theta_*(\eta)$.  Then $\xi+\theta(x)$ is a cycle in
$D_1(Sh)$ and it represents a nonzero class in $H_1(Sh)$.  This proves
assertion (i). Letting $z$ denote this class, we compute $zT$ on the
chain level.  It may help to refer to the following diagrams.  In
$D(\cW)$ we have our cycle
$$
\xymatrix{
\eta & \bullet\\
\bullet & x & \bullet
}
$$
so that $\partial_2\eta=-\partial_1 x$.
Mapping this to 
$D(Sh)$ we have the cycle
$$
\xymatrix{
\xi & \bullet\\
\bullet & \theta(x) & \bullet & 
}
$$
so that 
$\partial_2\xi=-\partial_1 \theta(x)$.

By hypothesis, there exists $u\in D_{11}(Sh)$ and  $y\in D_{20}(Sh)$ such that 
$$
\partial_1 y + \theta(x){\bf s} - \partial_2 u = a\theta(x)
$$ 
on the chain level.
It follows that 
$$
\partial_2(a\xi)  = -a\partial_1\theta(x)  = -\partial_1\theta(x){\bf s} + \partial_1\partial_2 u = 
\partial_2 \xi{\bf s} + \partial_2\partial_1 u.
$$ 
Since the columns of $D$ are exact, there exists $w\in D_{02}(Sh)$ such that
$$
 a\xi= \xi {\bf s}+\partial_1 u + \partial_2 w.
$$
In pictures,
$$
\xymatrix{
w\\
\xi {\bf s} & u\\
\bullet & \theta(x){\bf s} & y & 
}
$$
shows that 
$(\xi+\theta(x)){\bf s}$ is homologous to $a(\xi+\theta(x))$ in
$H_1(Sh)$.  This proves (ii) and completes the proof of the theorem.
\end{proof}

\bibliographystyle{amsalpha_no_mr}
\bibliography{AGM-V}

\end{document}